\documentclass[12pt,english]{amsart}

\usepackage{amsmath,amscd,amsfonts,amsthm,amssymb}
\usepackage{graphicx}
\usepackage{verbatim} 
\input xypic
\xyoption{all}

\newcommand{\shrinkmargins}[1]{
  \addtolength{\textheight}{#1\topmargin}
  \addtolength{\textheight}{#1\topmargin}
  \addtolength{\textwidth}{#1\oddsidemargin}
  \addtolength{\textwidth}{#1\evensidemargin}
  \addtolength{\topmargin}{-#1\topmargin}
  \addtolength{\oddsidemargin}{-#1\oddsidemargin}
  \addtolength{\evensidemargin}{-#1\evensidemargin}
  }

\shrinkmargins{.7}
\setlength{\parindent}{0in}

\newcommand{\field}[1]{\mathbb{#1}}

\newcommand{\Z}{\field{Z}}
\newcommand{\F}{\field{F}}

\newcommand{\R}{\field{R}}

\newcommand{\beq}{\begin{displaymath}}
\newcommand{\eeq}{\end{displaymath}}
\newcommand{\beqn}{\begin{equation}}
\newcommand{\eeqn}{\end{equation}}

\theoremstyle{plain}
\newtheorem{thm}{Theorem}[section]
\newtheorem{prop}[thm]{Proposition}

\newtheorem{lem}[thm]{Lemma}
\newtheorem*{nota}{Notation}

\newtheorem*{cor*}{Corollary}
\newtheorem*{defn*}{Definition}

\theoremstyle{definition}

\theoremstyle{remark}
\newtheorem{rem}[thm]{Remark}
\newtheorem*{rem*}{Remark}

\newenvironment{quest}[2][Question]{\begin{trivlist}
\item[\hskip \labelsep {\bfseries #1}\hskip \labelsep {\bfseries #2.}]}{\end{trivlist}}

\newenvironment{thmcustom}[2][Theorem]{\begin{trivlist}
\item[\hskip \labelsep {\bfseries #1}\hskip \labelsep {\bfseries #2.}]}{\end{trivlist}}

\begin{document}
\title{Kakeya sets over non-archimedean local rings}

\author{Evan P. Dummit, M\'arton Hablicsek}
\address{UW-Madison, Dept. of Mathematics, 480 Lincoln Dr., Madison, WI 53706-1388}
\email{dummit@math.wisc.edu}
\email{hablics@math.wisc.edu}

\maketitle

\begin{abstract}
In a recent paper \cite{Ell09} of Ellenberg, Oberlin, and Tao, the authors asked whether there are Besicovitch phenomena in $\F_q[[t]]^n$. In this paper, we answer their question in the affirmative by explicitly constructing a Kakeya set in $\F_q[[t]]^n$ of measure 0.  Furthermore, we prove that any Kakeya set in $\F_q[[t]]^2$ or $\Z_p^2$ is of Minkowski dimension 2.
\end{abstract}

\section{Introduction}
The classical Kakeya problem asks how small (in measure) a subset of the Euclidean plane can be, if it contains a unit line segment in every possible direction; Besicovitch \cite{Bes19} showed that such sets could have measure 0.  Wolff \cite{Wol99} proposed a generalization of the Besicovitch-Kakeya problem to a finite field setting, which was solved by Dvir in \cite{Dvir08}.  We give a natural generalization of the Besicovitch-Kakeya problem to any infinite ring admitting a finite Haar measure.
\\ \\
Let $R$ be an infinite ring admitting a Haar measure $\mu$ such that $\mu(R)$ is finite.
\begin{defn*}
A {\bf line} with direction vector $v \in R^n$ through the point $x \in R^n$ consists of the elements in $R^n$ of the form $x + t v$ as $t$ runs through the elements of $R$.
\end{defn*}

\begin{defn*}
A {\bf Kakeya set} in $R^n$ is a subset of $R^n$ which contains (all the points on) at least one line with each possible direction vector.
\end{defn*}
\vspace{1pc}
The generalization of the classical Besicovitch-Kakeya problem is then the following:
\begin{quest}{1}
If $R$ is an infinite ring admitting a finite Haar measure and $n>1$, does there exist a Kakeya set of zero Haar measure in $R^n$?
\end{quest}
\vspace{1pc}
If such a Kakeya set of measure zero exists, we refer to it as a Besicovitch set.  For such sets, the analogue of the so-called ``Kakeya conjecture'' becomes
\begin{quest}{2}
If there exists a Besicovitch set in $R^n$, is its dimension necessarily equal to $n$?
\end{quest}
\vspace{1pc}
In \cite{Ell09}, Ellenberg, Oberlin, and Tao posed Question 1 using the outer measure for $R = \F_q[[t]]$:
\\
\textit{
Let $E$ be a subset of $\F_q[[t]]^n$ containing a line in every direction, and write $E_k$ for the image of $E$ under the projection $\F_q[[t]]^n\rightarrow (\F_q[[t]]/t^k)^n$. Are there Besicovitch phenomena in $\F_q[[t]]^n$? That is, is it possible that
$$\lim_{k\rightarrow  \infty} |E_k||\F_q|^{-nk} = 0?$$
}

In Section 2 we show that the answer to Question 1 is ``yes'' for $R = \F_q[[t]]$ by proving the following:

\begin{thm}\label{main}
For all $n>1$, there exists a subset $E\subset \F_q[[t]]^n$ of measure 0 which contains a line in every direction.
\end{thm}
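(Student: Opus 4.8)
The plan is first to reduce to $n=2$. Assuming a measure‑zero set $E_2\subseteq\F_q[[t]]^2$ containing a line in every direction, normalized so that each of its lines meets the axis $\{0\}\times\F_q[[t]]$, I would set
\[
E_n:=\{(x_1,\dots,x_n)\in\F_q[[t]]^n : (x_1,x_j)\in E_2\ \text{for } 2\le j\le n\}.
\]
Since $\mu(E_2)=0$ forces $\mu\big((E_2)_x\big)=0$ for a.e.\ $x$, Fubini gives $\mu(E_n)=\int\mu\big((E_2)_{x_1}\big)^{\,n-1}\,dx_1=0$; and given a direction $v$, after rescaling by a unit so that some coordinate of $v$ equals $1$, one picks a base point $a$ (with $a_1$ arbitrary when $v_1$ is a unit, and $a_1=0$ otherwise) for which every planar projection $\{(a_1+sv_1,\,a_j+sv_j):s\}$ of the line $\{a+sv:s\}$ is the selected line of $E_2$ in direction $(v_1,v_j)$, so that $\{a+sv:s\}\subseteq E_n$.

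For $n=2$: every line whose direction is not in $t\F_q[[t]]^2$ is a graph $y=bx+c$ or $x=ay+c$ with $a\in t\F_q[[t]]$, and every other line is contained in one of these; moreover the reduction of $y=bx+c$ mod $t^k$ depends only on $b$ and $c$ mod $t^k$. So I would look for intercept assignments $b\mapsto c(b)$ and $a\mapsto c'(a)$ and, letting $E$ be the resulting union of lines, try to arrange $|\pi_k(E)|q^{-2k}\to0$, where $\pi_k$ is reduction to $(\F_q[t]/t^k)^2$; this gives $\mu(E)=0$ since $|\pi_k(E)|q^{-2k}$ is nonincreasing in $k$ and dominates $\mu(E)$, while $E$ is Kakeya by construction. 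The mechanism that makes $o(q^{2k})$ available, and that is unavailable over a field, is that two lines $y=bx+c$ and $y=b'x+c'$ with $b\equiv b'\pmod{t^{j}}$ can coincide in up to $q^{j}$ of the $q^{k}$ points of their reductions mod $t^k$ provided $c\equiv c'\pmod{t^{j}}$ as well. I would therefore group the slopes by their residues mod $t,t^2,t^3,\dots$ and, at every scale, translate each subfamily of lines so as to overlap as much as possible with its siblings — a $t$‑adic analogue of the Perron‑tree rearrangement — and do the same for the (easier, ``thinner'') second family.

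The hard part will be estimating $|\pi_k(E)|$: one must show that after incorporating the overlaps from scales $1,\dots,k$ one gets $|\pi_k(E)|\le\epsilon_k q^{2k}$ with $\epsilon_k\to0$. A first‑moment computation — which uses that a continuous intercept assignment automatically has $c(b)\equiv c(b')\pmod{t^{j}}$ whenever $b\equiv b'\pmod{t^{j}}$ — shows that for \emph{every} admissible assignment the expected number, over a uniformly random shear $x\in\F_q[t]/t^k$, of coincident pairs among the $q^k$ reduced slope‑lines is exactly $k(q-1)q^{k-1}$; by Cauchy–Schwarz this forces $|\pi_k(E)|\gtrsim q^{2k}/k$ no matter what, so the aim is to choose the recursive sliding so that this lower bound is essentially met, i.e.\ the coincidences are spread evenly over all $k$ scales and the shear‑fibers stay balanced of size $\approx k$. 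The obstacle is that a rearrangement at one scale alone improves the trivial bound only by a bounded factor — the bush of all lines through a single point has measure $\tfrac{q}{q+1}$, and a single quadratic twist of the intercepts reduces this to $\tfrac12$, but that is as far as one scale goes — so the gains from all scales must be made to compound, which means tracking carefully how overlaps produced at one scale survive the passage to the next.

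Finally, the construction yields, for each $k$, a \emph{compact} Kakeya set of measure $\le\epsilon_k$; building it scale by scale one can keep these nested, $E^{(1)}\supseteq E^{(2)}\supseteq\cdots$, and then $E:=\bigcap_k E^{(k)}$ is compact with $\mu(E)=0$. It is still Kakeya: for a fixed direction the chosen lines $\ell_k\subseteq E^{(k)}$ lie in the compact space of lines of that direction, so some subsequence converges to a line $\ell$, and $\ell\subseteq\overline{E^{(m)}}=E^{(m)}$ for every $m$, hence $\ell\subseteq E$. Together with the first paragraph this settles all $n>1$.
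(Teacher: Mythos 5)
Your reduction to $n=2$ is fine in outline (the paper does it even more simply, taking a product $K\times\F_q[[t]]^{n-2}$ of a planar Besicovitch set with the remaining factors), and you have correctly identified the mechanism that makes a measure-zero Kakeya set possible over $\F_q[[t]]$ but not over a field: two lines $y=bx+c(b)$ and $y=b'x+c(b')$ with $b\equiv b'\pmod{t^j}$ can share $q^j$ of the $q^k$ points of their reductions mod $t^k$ once the intercepts are also congruent mod $t^j$. But the proposal stops exactly where the theorem begins. You never exhibit an intercept assignment $b\mapsto c(b)$, and you never prove a bound of the form $|\pi_k(E)|=o(q^{2k})$; you explicitly defer both (``I would look for \dots'', ``the hard part will be \dots'', ``the aim is to choose the recursive sliding so that \dots''). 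The first-moment and Cauchy--Schwarz computation you do carry out yields only a \emph{lower} bound $|\pi_k(E)|\gtrsim q^{2k}/k$ valid for every continuous assignment; it says nothing about whether any assignment attains $o(q^{2k})$, since a large average number of coincidences is perfectly compatible with the coincidences piling up on few shears while $|\pi_k(E)|$ stays of full size. Making the single-scale gains ``compound across scales'' is precisely the content of the theorem, and it is missing.

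For comparison, the paper resolves this with a completely explicit choice: the line of slope $b$ is assigned intercept $-b^*$, where $b^*$ is obtained from $b$ by shifting the coefficient string one place and overwriting the coefficient in each position $2^k-2$ by $0$. Membership of $(x,y)$ in the resulting set then becomes an infinite linear system in the unknown slope coefficients $a_i$, with one genuinely constraining equation at each index $2^k-2$; the paper shows that the number $s_{2^n-2}(x,y)$ of solutions of the truncated system evolves as a Markov chain on $\{0,1,q,q^2,\dots\}$ whose expectation is constant but which is absorbed at $0$ with probability $1$, whence the set has measure zero. (Incidentally, the decay this produces is only of order $\ln\ln k/\ln k$, far from your target $1/k$, so insisting on meeting the Cauchy--Schwarz lower bound is strictly harder than what is needed or achieved.) Until you supply a concrete assignment and an actual estimate of $|\pi_k(E)|$, what you have is a plausible plan, not a proof.
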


In Section 3 we show that the answer to Question 2 is ``yes'' for both $R = \F_q[[t]]$ and $R = \Z_p$, in dimension 2 and with the Minkowski dimension:

\begin{thm}\label{Mink}
Let $E$ be a Kakeya set in $R^2$ where $R=\Z_p$ or $R=\F_{q} [[t]]$.  Then $E$ has Minkowski dimension 2.
\end{thm}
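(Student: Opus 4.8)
The plan is to work on the finite quotients $R/\mathfrak{m}^k$, where $\mathfrak{m}=(p)$ or $(t)$ is the maximal ideal and the residue field $R/\mathfrak{m}$ has $q$ elements (for $R=\Zp$ we set $q=p$), and to prove a lower bound on the size of $E_k$, the image of $E$ under $R^2\ra (R/\mathfrak{m}^k)^2$. Since $R^2$ is partitioned into the $q^{2k}$ cosets of $\mathfrak{m}^k\times\mathfrak{m}^k$, which are exactly the balls of radius $q^{-k}$, the number of such balls meeting $E$ equals $|E_k|$, so the Minkowski dimension of $E$ is $\lim_{k\ra\infty}\frac{\log_q|E_k|}{k}$ provided this limit exists. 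The trivial bound $|E_k|\le q^{2k}$ gives upper Minkowski dimension $\le 2$, so the whole theorem reduces to showing $|E_k|\ge q^{2k}/(k+1)$, which forces $\frac{\log_q|E_k|}{k}\ge 2-\frac{\log_q(k+1)}{k}\ra 2$ and hence lower Minkowski dimension $\ge 2$.

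For the lower bound I would first note that, since $E$ contains a line with direction vector $(1,a)$ for every $a\in R$, the set $E_k$ contains, for every $\alpha\in R/\mathfrak{m}^k$, the graph $L_\alpha=\{(s,\alpha s+c_\alpha): s\in R/\mathfrak{m}^k\}$ of an affine function with slope $\alpha$ and some intercept $c_\alpha$ (just take a line in a direction $(1,a)$ with $a\equiv\alpha$, project, and reparametrize by the first coordinate); each $L_\alpha$ has exactly $q^k$ points. Introduce the multiplicity $r(x)=\#\{\alpha: x\in L_\alpha\}$ for $x\in(R/\mathfrak{m}^k)^2$, so $\sum_x r(x)=q^k\cdot q^k=q^{2k}$, and apply Cauchy--Schwarz:
\[
 |E_k|\;\ge\;\#\{x: r(x)>0\}\;\ge\;\frac{\left(\sum_x r(x)\right)^2}{\sum_x r(x)^2}\;=\;\frac{q^{4k}}{\sum_{\alpha,\beta}|L_\alpha\cap L_\beta|}.
\]
To bound the denominator: the diagonal terms give $q^k\cdot q^k=q^{2k}$, and for $\alpha\neq\beta$ an intersection point satisfies $(\alpha-\beta)s=c_\beta-c_\alpha$, so if $v$ denotes the valuation on $R/\mathfrak{m}^k$ and $j=v(\alpha-\beta)\in\{0,\dots,k-1\}$, then multiplication by $\alpha-\beta$ has kernel of size $q^{j}$ and hence $|L_\alpha\cap L_\beta|\le q^{j}$. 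Since $\#\{d\in R/\mathfrak{m}^k: v(d)=j\}=(q-1)q^{k-j-1}$, for each fixed $\alpha$ one gets $\sum_{\beta\neq\alpha}q^{v(\alpha-\beta)}=\sum_{j=0}^{k-1}q^{j}(q-1)q^{k-j-1}=k(q-1)q^{k-1}$, whence $\sum_{\alpha\neq\beta}|L_\alpha\cap L_\beta|\le q^k\cdot k(q-1)q^{k-1}<kq^{2k}$. Altogether $\sum_{\alpha,\beta}|L_\alpha\cap L_\beta|<(k+1)q^{2k}$, giving $|E_k|>q^{2k}/(k+1)$ as needed.

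The one feature of the non-archimedean setting that genuinely differs from the classical field case — and the step I would take most care with — is the intersection estimate $|L_\alpha\cap L_\beta|\le q^{v(\alpha-\beta)}$: over the ring $R/\mathfrak{m}^k$ two distinct ``lines'' need not meet in at most one point, and when their slopes agree modulo a high power of $\mathfrak{m}$ they can share as many as $q^{k-1}$ points. This is exactly the source of the factor $k+1$ loss (and is consistent with the existence of measure-zero Kakeya sets, Theorem~\ref{main}); but the total intersection mass summed over all pairs of slopes is still only $O(kq^{2k})$, which is negligible on the logarithmic scale defining Minkowski dimension, so the conclusion goes through. I would finish by observing that the argument uses only the common structure of $\Zp$ and $\F_q[[t]]$ as discrete valuation rings with residue field of size $q$, so it applies verbatim to both cases.
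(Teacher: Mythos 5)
Your proof is correct, but it takes a genuinely different route from the paper's. The paper proves the finite-level bound $|E_k|\ge |R_k|^2/(2k)$ (with $R_k=\F_q[t]/t^k$ or $\Z/p^k\Z$) by a greedy inclusion--exclusion argument: it enumerates the slopes in a specific digit-reversed order so that the partial sums $f(u)=\sum_{i\le u}|\mathfrak{m}|^{v(\alpha_i)}$ controlling the accumulated pairwise intersections stay below $|R_k|$, and then only uses the first $\lfloor |R_k|/k\rfloor$ lines, each of which contributes a positive number of new points. You instead use all $q^k$ lines and run the standard $L^2$/Cauchy--Schwarz incidence argument, bounding $\sum_{\alpha,\beta}|L_\alpha\cap L_\beta|$ via the exact count $(q-1)q^{k-j-1}$ of differences of valuation $j$, which gives $\sum_{\beta\ne\alpha}q^{v(\alpha-\beta)}=k(q-1)q^{k-1}$ and hence $|E_k|>q^{2k}/(k+1)$. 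Both arguments rest on the same key non-archimedean fact you correctly isolate — two lines whose slopes differ by an element of valuation $j$ can meet in as many as $q^{j}$ points, and the average of $q^{v(d)}$ over $d\ne 0$ is about $k$ — and both lose exactly this logarithmic factor, which vanishes on the scale of Minkowski dimension. Your constant $1/(k+1)$ is in fact marginally better than the paper's $1/(2k)$, and your argument avoids the bookkeeping of the enumeration and of the partial-sum lemma; the paper's version has the mild advantage of exhibiting an explicit small subfamily of $|R_k|/k$ lines that already covers a $1/(2k)$ fraction of the plane. The remaining steps (identifying $|E_k|$ with the covering number by balls of radius $q^{-k}$, the trivial upper bound, and the uniform treatment of $\Z_p$ and $\F_q[[t]]$ as complete DVRs with residue field of size $q$) match the paper's reduction.
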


\begin{rem*} For a finite ring $R$, the Minkowski dimension of a set $E\subset R^n$ is defined as $\displaystyle\frac{\log |E|}{\log |R|}$. The natural analogue of the Minkowski dimension of a compact subset $E\subset \F_q[[t]]^n$ is
$$\lim_{k\rightarrow \infty} \frac{\log |E_k|}{\log |\F_q|^k},$$
provided the limit exists.
\end{rem*}

We note that the above statements are analogous to the known results for the original Besicovitch-Kakeya problem over $\R$; cf. \cite{Bes19}, \cite{Dav71}.

\section{Existence of a Kakeya set of measure 0 for $R=\F_q[[t]]$}

Let $\F_q$ be the finite field of order $q$.  We refer to a nonzero direction vector $v=(a,b)$ in $\F_q[[t]]^2$ as {\bf nonreduced} if $t$ divides both $a$ and $b$, and as {\bf reduced} otherwise.  It is obvious that any line with nonreduced direction vector $v$ passing through $(x,y)$ is contained in the line with direction vector $v/t$ through $(x,y)$; thus, we need only consider reduced direction vectors.

\begin{lem}\label{general_Kakeya_from_specific} If $H$ is a set of measure 0 in $\F_q[[t]]^2$ containing some line with direction vector $(1,b)$ for each $b\in \F_q [[t]]$, then $K:=\{(x,y): (x,y)\text{ or }(y,x)\in H\}$ is a Besicovitch set in $\F_q[[t]]^2$, and $K^{(n)}:=K\times \F_q[[t]]^{n-2}$ is a Besicovitch set in $\F_q[[t]]^n$.
\end{lem}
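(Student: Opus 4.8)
The plan is to reduce the general reduced-direction case to the special case handled by $H$, using two elementary observations: first, that a reduced direction vector $(a,b)$ has at least one coordinate that is a unit in $\F_q[[t]]$, so after possibly swapping coordinates we may assume $a$ is a unit; and second, that the set of lines is insensitive to rescaling the direction vector by a unit. Concretely, if $(a,b)$ is reduced with $a \in \F_q[[t]]^\times$, then $(a,b)$ and $(1, b/a)$ span the same line through any given point, so the line in direction $(1,b/a)$ guaranteed inside $H$ gives us (after the coordinate swap, if one was needed) a line in direction $(a,b)$ inside $K$. Thus $K$ contains a line in every reduced direction, and by the remark preceding the lemma it therefore contains a line in every nonzero direction, i.e. $K$ is a Kakeya set.

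Next I would check that $K$ has measure $0$. Since $K = \{(x,y) : (x,y) \in H\} \cup \{(x,y) : (y,x) \in H\} = H \cup \sigma(H)$ where $\sigma(x,y) = (y,x)$ is the coordinate swap, and $\sigma$ is a measure-preserving homeomorphism of $\F_q[[t]]^2$ (it is linear with unit determinant, hence preserves Haar measure), we get $\mu(K) \le \mu(H) + \mu(\sigma(H)) = 0 + 0 = 0$. Hence $K$ is a Besicovitch set in $\F_q[[t]]^2$.

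For the higher-dimensional statement, set $K^{(n)} = K \times \F_q[[t]]^{n-2}$. A nonzero direction vector in $\F_q[[t]]^n$ is $v = (v_1, v')$ with $v_1 \in \F_q[[t]]^2$ and $v' \in \F_q[[t]]^{n-2}$; if $v_1 \ne 0$, take a line in direction $v_1$ inside $K$ through some point $(x,y)$, and then $\{((x,y) + t v_1,\, t v') : t \in \F_q[[t]]\}$ is a line in direction $v$ lying in $K \times \F_q[[t]]^{n-2}$; if $v_1 = 0$ then $v' \ne 0$ and any line in direction $(0,v')$ through a point of the form $(x,y,\ast)$ with $(x,y)\in K$ (such $(x,y)$ exists since $K$ is nonempty) lies in $K^{(n)}$. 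So $K^{(n)}$ is a Kakeya set. Finally $\mu_n(K^{(n)}) = \mu_2(K)\cdot \mu(\F_q[[t]])^{n-2} = 0$ by the product structure of Haar measure, so $K^{(n)}$ is Besicovitch.

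I do not anticipate a serious obstacle here; the only points requiring a word of care are that "reduced" really does force a unit coordinate (immediate: if $t \mid a$ and $t \mid b$ the vector is nonreduced, so some coordinate is a unit since $\F_q[[t]]$ is local with maximal ideal $(t)$) and that the coordinate swap $\sigma$ preserves Haar measure (immediate from translation-invariance and the fact that $\sigma$ carries the standard compact-open neighborhood basis to itself). Everything else is bookkeeping.
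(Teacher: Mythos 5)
Your proof is correct and follows essentially the same route as the paper's (which is much terser): reduce every reduced direction to the form $(1,b)$ or $(b,1)$ via a coordinate swap and unit rescaling, note $K = H \cup \sigma(H)$ has measure zero, and take the product with full factors for $n>2$. The extra details you supply (unit rescaling of direction vectors, measure-invariance of the swap, and the $v_1=0$ case in higher dimensions) are points the paper leaves implicit, and they check out.
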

\begin{proof}
Each reduced direction vector in $\F_q[[t]]^2$ is of the form $(a,1)$ or $(1,a)$ for some $a\in \F_q [[t]]$.  Therefore, if $H$ is of measure 0 and contains a line with direction vector $(1,b)$ for each $b\in \F_q [[t]]$, then $K$ is also of measure 0 and contains a line in every direction; i.e., $K$ is a Besicovitch set in $\F_q[[t]]^2$.  Furthermore, $K^{(n)}$ is also of measure 0 and will contain a line in every direction; hence is a Besicovitch set in $\F_q[[t]]^n$ 
\end{proof}

We employ the following notation:
\begin{itemize}
\item For any $a\in \F_q [[t]]$, $a_i$ denotes the coefficient of $t^i$.
\item For any $a\in \F_q [[t]]$, the element $a^* \in \F_q [[t]]$ is defined by 
$$a^*_i = \begin{cases} 0 & \mbox{if } i=2^k-2 \mbox{ for some natural number } k,\\ a_{i+1} & \mbox{otherwise}.\end{cases}$$
\end{itemize}

\begin{thm}\label{thm_construction_of_specific_Kakeya} The set $$H :=\{(x,y)\in \F_q[[t]]^2:\, ax+y=a^* \mbox{ for some } a\in \F_q[[t]]\}$$
contains a line with direction vector $(1,b)$ for each $b\in \F_q [[t]]$, and has measure 0.
\end{thm}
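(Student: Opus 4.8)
The statement has two parts: (1) $H$ contains a line with direction vector $(1,b)$ for every $b \in \F_q[[t]]$, and (2) $H$ has measure zero. Part (1) should be the easy part. Given $b$, I want to find a point $(x_0, y_0)$ such that the whole line $\{(x_0 + t \cdot s, y_0 + t b \cdot s) : s \in \F_q[[t]]\}$... wait, actually the line with direction $(1,b)$ through $(x_0,y_0)$ is $\{(x_0 + s, y_0 + sb) : s \in \F_q[[t]]\}$ as $s$ runs over $\F_q[[t]]$. For this line to lie in $H$, I need: for each $s$, there exists $a = a(s)$ with $a(x_0 + s) + (y_0 + sb) = a^*$. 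The natural guess is to take $a = b$ on the whole line (or rather, to arrange that the point on the line parametrized so that $x = a$ works). Indeed, if I set $x = a$ as the parameter, then the condition $ax + y = a^*$ becomes $y = a^* - a^2$. So consider the curve $\{(a, a^* - a^2) : a \in \F_q[[t]]\}$; this is exactly $H$ restricted to... hmm, but that's a curve, not obviously a union of lines. Let me reconsider: fix $b$. I claim the line through $(0, b^* - b^2)$ — no wait. Let me set it up as: I want $x_0, y_0$ so that for all $s$, $(x_0+s)\cdot a + (y_0 + sb) = a^*$ is solvable in $a$. Try $a = b + (\text{something depending on } s)$. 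Actually the cleanest approach: the point with $x$-coordinate $a$ on "the" line should be the point $(a, a^* - a^2)$ only if the line is tangent... The real idea must be that $a \mapsto a^*$ is $\F_q$-linear (it just shifts and kills coefficients), so $a^* - a^2$ — the map $a \mapsto a^* - a x - y$... Let me think again: fix $b$; I want a line $L_b$ in direction $(1,b)$ inside $H$. Parametrize $L_b$ by its $x$-coordinate: points are $(x, cx + d)$ for the line $y = bx + (\text{const})$, so $c = b$, $d = $ const. Need: for every $x \in \F_q[[t]]$, $\exists a$: $ax + (bx + d) = a^*$. Take $a = b$: then need $bx + bx + d = b^*$, i.e. $2bx + d = b^*$ for all $x$ — impossible unless $b = 0$. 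So $a$ must vary with $x$. Write $a = b + \delta$: $(b+\delta)x + bx + d = b^* + \delta^*$, i.e. $2bx + \delta x + d - b^* = \delta^*$, i.e. $\delta^* - \delta x = 2bx + d - b^*$. Since $\delta \mapsto \delta^* - \delta x$ is... not obviously surjective. Hmm — the key is that $\delta \mapsto \delta^*$ shifts coefficients down by one and deletes positions $2^k - 2$; I expect the argument is that one can solve for $\delta$ coefficient-by-coefficient (the $\delta^*$ contributes $\delta_{i+1}$ to position $i$, and $\delta x$ contributes lower-order-determined stuff), so it's a triangular system. The plan for part (1): fix $b$, set $d := b^* - b^2 \cdot(\text{hmm})$... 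I will choose the constant term $d$ and then solve recursively for $\delta = \delta(x)$ for each $x$, using that the coefficient of $t^i$ in $\delta^*$ is $\delta_{i+1}$ (when $i \neq 2^k-2$) so one can bootstrap. The main point to verify is that the recursion never gets stuck at the skipped positions $i = 2^k - 2$ — and this is precisely why those positions were removed from the definition of $a^*$.

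For part (2), measure zero: $H$ is the image (union over $a$) of the graph map $a \mapsto \{(x,y) : ax + y = a^*\}$, each of which is a "line" (graph of $y = a^* - ax$, an affine-linear-in-$x$ set) — so $H = \bigcup_{a \in \F_q[[t]]} \{(x, a^* - ax) : x \in \F_q[[t]]\}$. To show measure zero I will pass to the finite quotients $(\F_q[[t]]/t^k)^2$ and bound $|H_k|$, the image of $H$ mod $t^k$, showing $|H_k| \cdot q^{-2k} \to 0$. A point of $H_k$ is determined by $(x \bmod t^k, (a^* - ax) \bmod t^k)$. The number of choices of $x \bmod t^k$ is $q^k$. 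The subtlety: different $a$ give the same or overlapping lines, so I need to count how many distinct values of $(a^* - ax) \bmod t^k$ arise as $a$ ranges, for fixed $x$ — or better, count pairs $(x \bmod t^k, a \bmod t^m)$ for suitable $m < k$ that determine the image point. The crucial observation is that $a^* \bmod t^k$ depends only on $a \bmod t^{k+1}$, and deletes roughly $\log_2 k$ coefficients; so $a \mapsto a^*$ restricted to $\F_q[[t]]/t^{k+1} \to \F_q[[t]]/t^k$ has image of size $q^{k - \lfloor \log_2 k\rfloor}$ or so. Hence for fixed $x$, the set $\{(a^* - ax) \bmod t^k\}$ has size at most (size of image of $a \mapsto a^*$) $\approx q^{k - c\log k}$, since $ax \bmod t^k$ is then determined... no, $ax$ also varies with $a$. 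Let me recount: the map $a \mapsto a^* - ax$ from $\F_q[[t]]/t^{k}$-ish; I want to show its image mod $t^k$ is small. Write it coefficientwise: the coefficient of $t^i$ in $a^* - ax$ is $a_{i+1}\cdot[i \neq 2^j-2] - \sum_{\ell} a_\ell x_{i-\ell}$. As $a$ ranges over all of $\F_q[[t]]$, what's the image mod $t^k$? The linear map $a \mapsto (a^* - ax) \bmod t^k$ from $\F_q[[t]] \to (\F_q[[t]]/t^k)$ — I'd compute its rank. Because of the deleted positions, I expect the rank to be $k - \#\{j : 2^j - 2 < k\} = k - \Theta(\log k)$. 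Then $|H_k| \le q^k \cdot q^{k - \Theta(\log k)} = q^{2k - \Theta(\log k)}$, so $|H_k| q^{-2k} \le q^{-\Theta(\log k)} \to 0$. This is the heart of the argument.

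So the concrete plan is: \textbf{Step 1.} Fix $b$; set $d := b^*$ (constant term of the line, or adjust) and show by induction on $i$ that for each $x \in \F_q[[t]]$ one can solve $ax + y = a^*$ with $y$ along a fixed line of slope $b$; the induction solves for $a_i$ in terms of $a_{<i}$ and $x$, using $a^*_{i-1} = a_i$ for $i - 1 \neq 2^j - 2$, and the skipped indices are exactly where the equation would otherwise be over-determined — conclude $H$ contains a line in direction $(1,b)$. \textbf{Step 2.} Identify $H$ mod $t^k$: $H_k = \{(\bar x, \overline{a^* - ax}) : x, a \in \F_q[[t]]\} \subseteq (\F_q[[t]]/t^k)^2$. \textbf{Step 3.} For fixed $\bar x$, bound the number of possible second coordinates: the map $a \mapsto \overline{a^* - ax} \in \F_q[[t]]/t^k$ factors through $a \bmod t^{k+1}$ and its image is contained in a translate of the image of $a \mapsto \bar{a^*}$, which has $\F_q$-codimension $\#\{j \ge 1 : 2^j - 2 \le k-1\} = \lfloor \log_2(k+1)\rfloor$ (roughly) — careful bookkeeping of which positions $2^j-2$ lie in $[0,k-1]$ gives codimension $\ge \log_2 k - C$. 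Actually I must be careful that $a \mapsto ax$ could "fill in" the missing coordinates; I'll handle this by instead noting $\overline{a^* - ax}$ and $\bar x$ together with the $\approx \log k$ missing coefficients of $a^*$ would determine $\overline{a^*}$ hence a lot — the cleanest is: the pair $(\bar x, \overline{a^* - ax})$ takes at most $q^k \cdot q^{k - \lfloor \log_2(k+1)\rfloor + 1}$ values because given $\bar x$, the second coordinate lies in an affine subspace of dimension $k - \lfloor\log_2(k+1)\rfloor + 1$ (the image of the linear map $a \mapsto \overline{a^* - ax}$, whose rank is at most that of $a \mapsto \bar a^*$ plus $0$ — since subtracting $ax$ is post-composition... no). I will instead directly compute: $\overline{a^* - ax}$ depends $\F_q$-linearly on the coefficients $a_0, \dots, a_k$; I claim for each index $j$ with $2^j - 2 \le k - 1$, the coefficient-of-$t^{2^j-2}$ functional on the image, after subtracting the (determined-by-lower-coefficients) contribution of $-ax$, is not free — giving genuine codimension. \textbf{Step 4.} Combine: $|H_k| \le q^{2k - \lfloor \log_2(k+1)\rfloor + O(1)}$, so $\lim_k |H_k| q^{-2k} = 0$, i.e. $H$ has measure $0$.

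\textbf{Expected main obstacle.} The delicate point is Step 3 — showing the coordinates of $a^* - ax$ at the "forbidden" positions $t^{2^j-2}$ are genuinely constrained and not recoverable from lower-order data via the $-ax$ term, so that the image really does have codimension growing like $\log k$. The combinatorics of tracking exactly which indices $2^j - 2$ fall below $k$, and confirming the recursion in Step 1 never stalls there, is where the special form of the definition of $a^*$ is essential; everything else is routine linear algebra over $\F_q$ and a Haar-measure/limit argument.
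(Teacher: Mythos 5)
Your plan for part (1) goes astray at the outset: you try $a=b$ along the whole line, find the contradiction $2bx+d=b^*$, and then launch into a recursion in which $a$ varies with $x$ and must be solved coefficient-by-coefficient. The correct (and trivial) choice is the \emph{constant} $a=-b$: since $*$ is $\F_q$-linear, the line $\{(s,\,bs-b^*): s\in\F_q[[t]]\}$ satisfies $(-b)s+(bs-b^*)=-b^*=(-b)^*$, so it lies in $H$ and has direction $(1,b)$. This is exactly what the paper does. Your varying-$a$ recursion, by contrast, really would stall: at an index $i=2^j-2$ the equation imposes a condition on already-determined coefficients of $\delta$, and nothing forces it to hold. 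So as written, part (1) is not salvageable along your route, though the fix is a one-line observation.

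The serious problem is part (2). Your Step 3 asserts that for (most) fixed $\bar x$ the image of the $\F_q$-linear map $a\mapsto \overline{a^*-ax}$ in $\F_q[[t]]/t^k$ has codimension $\Theta(\log k)$, because $a^*$ deletes the coefficients at positions $2^j-2$. This is false: the $-ax$ term does fill in the deleted positions. Concretely, for $x=1$ the $i$-th coefficient of $a^*-a$ is $a_{i+1}-a_i$ for $i\neq 2^j-2$ and $-a_i$ for $i=2^j-2$, and a straightforward back-substitution shows this map is \emph{surjective} onto $\F_q^k$, so for $\bar x=\bar 1$ every $\bar y$ lies in $H_k$. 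More damningly, the same elimination shows that the map is surjective whenever a certain collection of about $\log_2 k$ polynomial conditions on $x$ (of the shape $x_{2^m-1}\neq R(x_0,\dots)$) all hold, which happens for a set of $x$ of measure roughly $(1-1/q)^{\log_2 k}$ --- close to $1$ for large $q$ and fixed $k$. Hence no uniform (or even typical) rank deficit of size $\log k$ exists, and your bound $|H_k|\leq q^{2k-\Theta(\log k)}$, which would give polynomial decay of $|H_k|q^{-2k}$, cannot be right; the paper's remark records that the true decay is only on the order of $\ln(\ln k)/\ln k$. The measure-zero statement comes not from a one-shot rank computation but from the slow accumulation, scale by scale, of the degeneracy events: the paper tracks the number $s_{2^n-2}(x,y)$ of partial solutions as a Markov chain whose expected value is a martingale and which is absorbed at $0$ with small but uniformly positive probability at each step. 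Some iterative argument of this kind (conditioning on the new coefficients $x_i,y_i$ revealed between stages $2^n-2$ and $2^{n+1}-2$) is unavoidable, and it is the content your proposal is missing.
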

By definition, for any $b\in \F_q[[t]]$, the points $(x,y) = (0,-b^*) + s(1,b)$ are contained in $H$, as $s$ ranges over $\F_q[[t]]$, since $(-b)s+(-b^* +bs) = (-b)^*$.  Therefore, we see that for any $b \in \F_q [[t]]$, a line with direction vector $(1,b)$ is contained in $H$, and so all that remains to be proven is that $\mu(H)=0$.
\\

We can see that $(x,y)\in H$ if and only if there exist $a_i \in \F_q$ for all $i\in \mathbb{N}$ such that the coefficients of $x$ and $y$ satisfy the following infinite system:
\begin{align*}
& a_0x_0+y_0=0, \tag*{[0]}\\
& a_1x_0+a_0x_1+y_1=a_2, \tag*{[1]}\\
& a_2x_0+a_1x_1+a_0x_2+y_2=0, \tag*{[2]}\\
& \qquad \qquad \qquad \vdots \\
& a_nx_0+a_{n-1}x_1+\dots+a_0x_n+y_n=a^*_n, \tag*{[$n$]} \\
& \qquad \qquad \qquad \vdots
\end{align*}
\\

We refer to $a_lx_0+\dots+a_0x_l+y_l=a_l^*$ as Equation $[l]$ and the system of Equations $[0]$ through $[l]$ as the ``$l$th-stage system''.
\\

For an arbitrary element $(x,y)\in \F_q[[t]]^2$, define $s_n(x,y)$ to be the number of tuples $(a_0,\dots,a_n)\in \F_q^{n+1}$ satisfying the $n$th-stage system; observe that $s_n(x,y)$ only depends on $\{x_0,y_0,\dots,x_n,y_n\}$.
Clearly if $s_n(x,y)=0$ for any integer $n$, then $(x,y)\not\in H$, and $s_k(x,y)=0$ for all $k>n$, so $\mu\left(\{(x,y)|s_n(x,y)=0\}\right)$ is non-decreasing as $n\rightarrow \infty$.  We will prove that $\mu\left(\{(x,y)\in \F_q[[t]]^2 \,:\, s_n(x,y)=0\}\right)\rightarrow 1$ as $n\rightarrow \infty$, from which $\mu(H)=0$.
\\

Observe that the equations at any stage are linear in the $a_i$.  Moreover, for $i$ not of the form $2^k-2$ for some integer $k$, Equation $[i]$ states $a_{i+1}=a_ix_0+a_{i-1}x_1+...+a_0x_i+y_i$, and so we may reduce our system of equations by eliminating $a_{i+1}$.  Basic linear algebra then implies that $s_{2^k-2} (x,y)$ is either zero or $q^l$ for some integer $l\leq k$.

\begin{nota}
In the sequel, $C_n$ will denote (any) polynomial of the variables $x_0$, $x_1$, ..., $x_n$, $y_0$, $y_1$, ..., $y_n$; thus for example $C_n + y_{n+1} C_{n-1} = C_{n+1}$ and $C_n + C_n = C_n$. We additionally will use $R_n$ to denote any rational functions of the same variables whose denominator is nonzero everywhere, and $C_{-1}$ to denote (any) element of $\F_q$.
\end{nota}

\begin{lem}\label{decomposition_1}
Every $a_n$ can be written as
$$a_n=\sum_{\substack{k\\ 2^k-1\leq n}} C_{n-2^k} a_{2^k-1}+C_{n-1}.$$
\end{lem}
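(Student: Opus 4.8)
The plan is to prove the formula by strong induction on $n$, using the reduced system of equations. Recall that for $i$ not of the form $2^k-2$, Equation $[i]$ lets us solve $a_{i+1} = a_i x_0 + a_{i-1} x_1 + \dots + a_0 x_i + y_i$, i.e., $a_{i+1}$ is a $C_i$-linear combination of $a_0, a_1, \dots, a_i$ (the $x$'s and $y_i$ being the polynomial coefficients), plus the constant term $y_i$ which is itself a $C_i$. So whenever $n+1$ is \emph{not} of the form $2^k-1$, that is, $n$ is not of the form $2^k-2$, we have an expression $a_{n+1} = \sum_{j=0}^{n} C_n\, a_j + C_n$, and we can substitute the inductive expressions for $a_0, \dots, a_n$ into it. When $n+1 = 2^k-1$ (equivalently $n = 2^k-2$), Equation $[n] = [2^k-2]$ is one of the ``free'' equations and imposes no relation expressing $a_{n+1} = a_{2^k-1}$ in terms of earlier $a_j$; in that case $a_{2^k-1}$ is a genuinely new free parameter, and it already appears as one of the summands $C_{n-2^k}\, a_{2^k-1}$ on the right-hand side (with $C_{n-2^k} = C_{-1} = 1$, a constant in $\F_q$).

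The induction step for the generic case $n$ not of the form $2^k - 2$: write $a_{n+1} = \sum_{j=0}^{n} c_{n,j}\, a_j + y_n$ where each $c_{n,j}$ is a polynomial in $x_0,\dots,x_n,y_0,\dots,y_n$, i.e.\ a $C_n$, and $y_n$ is also a $C_n$. Substitute the inductive hypothesis $a_j = \sum_{2^k - 1 \le j} C_{j - 2^k}\, a_{2^k - 1} + C_{j-1}$ for each $j \le n$. Collecting the coefficient of a fixed $a_{2^k-1}$ (over all $j$ with $2^k - 1 \le j \le n$, hence $2^k - 1 \le n+1$) gives $\sum_{j} c_{n,j}\, C_{j - 2^k}$, which is a polynomial in $x_0,\dots,x_n,y_0,\dots,y_n$ and hence a $C_{n+1}$ (indeed even a $C_n$); note $j - 2^k \le n - 2^k < (n+1) - 2^k$, so the degree bookkeeping $C_{(n+1) - 2^k}$ claimed in the lemma is comfortably satisfied. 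The leftover constant terms combine as $\sum_j c_{n,j}\, C_{j-1} + y_n$, which is again a polynomial in the listed variables, i.e.\ a $C_n$, hence a $C_{n+1}$. This yields exactly the asserted shape $a_{n+1} = \sum_{2^k - 1 \le n+1} C_{(n+1) - 2^k}\, a_{2^k - 1} + C_n$. The base case $n = 0$: Equation $[0]$ gives $a_0 x_0 + y_0 = 0$, but more to the point $a_0 = a_{2^0 - 1} = a_0$ is itself of the required form with $C_{-1} = 1$ and $C_{-1} = 0$ the constant term, so the formula holds trivially for small $n$ (one must just check $n = 0, 1, 2$ by hand to seed the induction across the first few ``free'' indices).

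The main obstacle is bookkeeping rather than a genuine difficulty: one must carefully track which equations are ``free'' (indices $2^k - 2$) versus ``eliminable'', and verify that the index shift $j - 2^k$ never drops below $-1$ and that the new free parameter $a_{2^k-1}$ enters with a constant (degree $-1$) coefficient exactly when $n+1$ first reaches $2^k - 1$. One subtlety worth stating explicitly: after substitution, a coefficient that was a priori a rational function could appear, but since the elimination $a_{i+1} = a_i x_0 + \dots + y_i$ involves \emph{no division}, all coefficients remain honest polynomials, so the $C_n$ notation (not $R_n$) is justified throughout. I expect the write-up to consist of setting up the two cases, performing the substitution in the generic case, and collecting coefficients — each step routine once the case distinction is pinned down.
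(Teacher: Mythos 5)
Your proposal takes essentially the same route as the paper: strong induction on the index, splitting into the case where the index is of the form $2^k-1$ (a genuinely new free parameter, formula immediate with $C_{-1}$ coefficient) and the generic case, where the preceding equation expresses the new $a$ as $\sum_j x_{n-j}a_j + y_n$ and one substitutes the inductive hypothesis and collects coefficients. One step of your bookkeeping is too coarse, though: you treat the coefficient $c_{n,j}$ of $a_j$ as a generic $C_n$, and from that the collected coefficient $\sum_j c_{n,j}\,C_{j-2^k}$ of $a_{2^k-1}$ is only guaranteed to be a $C_n$, not the $C_{(n+1)-2^k}$ the lemma asserts; your remark that $j-2^k < (n+1)-2^k$ bounds only the second factor of each product. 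The fix is to use the explicit form $c_{n,j}=x_{n-j}$: since $j\geq 2^k-1$, the index $n-j$ is at most $(n+1)-2^k$, so each product $x_{n-j}C_{j-2^k}$ is genuinely a $C_{(n+1)-2^k}$ --- this is exactly the identity $\sum_{l=2^k-1}^{n-1}x_{n-l-1}C_{l-2^k}=C_{n-2^k}$ that the paper invokes. This refined index is the whole content of the lemma (a bound of $C_{n-1}$ would be trivial) and is what later makes the coefficient of $a_{2^k-1}$ independent of the high-index variables in Lemma~\ref{probabilities}, so it should not be waved through. Your side observation that the elimination involves no division, so the $C$ (rather than $R$) notation is justified here, is correct and matches the paper.
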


\begin{proof}
We show the result by strong induction on $n$.  For $n=2^k-1$ the statement is immediate since $a_n=a_{2^k-1}$. Now if $n\neq 2^k-1$, Equation $[n-1]$ reads
$$a_{n-1}x_0+...+a_0x_{n-1}+y_{n-1}=a_n.$$
By the induction hypothesis, every $a_l$ on the left-hand side is of the form
$$a_l=\sum_{\substack{k\\ 2^k-1\leq l}} C_{l-2^k} a_{2^k-1}+C_{l-1}$$
and so we may write
\begin{align*}
a_n &=\sum_{l=0}^{n-1} x_{n-l-1}a_l + y_{n-1} \\
 & =\sum_{l=0}^{n-1} x_{n-l-1}\left(\sum_{\substack{k\\ 2^k-1\leq l}} C_{l-2^k} a_{2^k-1}+C_{l-1}\right)+y_{n-1} \\
&=\sum_{\substack{k\\ 2^k-1\leq n}} a_{2^k-1}\left(\sum_{l=2^k-1}^{n-1} x_{n-l-1}C_{l-2^k}\right)+\sum_{l=0}^{n-1} x_{n-l-1}C_{l-1}+y_{n-1}.
\end{align*}
By definition we have $\displaystyle\sum_{l=2^k-1}^{n-1} x_{n-l-1}C_{l-2^k}=C_{n-2^k}$ and $\displaystyle\sum_{l=0}^{n-1} x_{n-l-1}C_{l-1}+y_{n-1}=C_{n-1}$.  Substituting these expressions yields the result as claimed.
\\
\end{proof}

\begin{lem}\label{decomposition_2}
For $n>0$, the left-hand side of Equation $[2^n-2]$ can be written as
$$\left[ \sum_{0\leq k < n} a_{2^k-1}\left( x_{2^n-2-(2^k-1)}+C_{2^n-2-(2^k-1)-1}\right) \right]+(y_{2^n-2}+C_{2^n-3}).$$
\end{lem}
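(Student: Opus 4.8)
The plan is to begin from the explicit form of Equation $[2^n-2]$: its left-hand side is
$$\sum_{l=0}^{2^n-2} a_l\, x_{2^n-2-l} + y_{2^n-2},$$
into which I substitute the expression from Lemma \ref{decomposition_1} for each $a_l$ (for $l=0$ this is just $a_0 = a_{2^0-1}$), and then interchange the order of summation so as to read off the coefficient of each $a_{2^k-1}$. First I would pin down the range of $k$: since $a_{2^k-1}$ occurs in the decomposition of $a_l$ exactly when $2^k-1\le l$, and $l$ runs only up to $2^n-2$, the surviving values of $k$ are those with $2^k-1\le 2^n-2$, i.e. $2^k\le 2^n-1$; this holds precisely for $0\le k\le n-1$, which is the range $0\le k<n$ appearing in the statement.

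Next, for a fixed such $k$ I would split the contribution to the coefficient of $a_{2^k-1}$ into the single term $l=2^k-1$ and the terms with $2^k-1<l\le 2^n-2$. The $l=2^k-1$ term contributes exactly $x_{2^n-2-(2^k-1)}$, since in its own (trivial) decomposition $a_{2^k-1}$ occurs with coefficient $1$; each remaining term contributes $x_{2^n-2-l}\,C_{l-2^k}$, and because $l\ge 2^k$ both indices $2^n-2-l$ and $l-2^k$ are at most $2^n-2-2^k = 2^n-2-(2^k-1)-1$, so their sum is a single $C_{2^n-2-(2^k-1)-1}$. Hence the coefficient of $a_{2^k-1}$ is $x_{2^n-2-(2^k-1)}+C_{2^n-2-(2^k-1)-1}$, as claimed. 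The terms not involving any $a_{2^k-1}$ are $y_{2^n-2}$ together with $\sum_{l=0}^{2^n-2}x_{2^n-2-l}\,C_{l-1}$, where $C_{l-1}$ is the constant part of the decomposition of $a_l$; the $l=0$ summand drops out because that constant part is $0$, and for $1\le l\le 2^n-2$ both indices $2^n-2-l$ and $l-1$ are at most $2^n-3$, so this sum collapses to a single $C_{2^n-3}$. Adding back $y_{2^n-2}$ gives $y_{2^n-2}+C_{2^n-3}$, which is the stated expression.

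I do not expect a real obstacle here beyond bookkeeping with the $C_n$ notation; the one point requiring genuine care is that the two leading monomials emerge with coefficient exactly $1$. This is because $x_{2^n-2-(2^k-1)}$ can only be produced by the $l=2^k-1$ summand (and there with coefficient $1$), while $x_{2^n-2}$ multiplies only $a_0=a_{2^0-1}$ and so never contaminates the constant part, and $y_{2^n-2}$ survives only as the explicit additive term, since each $a_l$ with $l\le 2^n-2$ involves the variables $y_i$ only for $i\le l-1\le 2^n-3$.
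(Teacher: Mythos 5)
Your proposal is correct and follows essentially the same route as the paper: substitute the decomposition from Lemma \ref{decomposition_1} into $\sum_{l} a_l x_{2^n-2-l}+y_{2^n-2}$, swap the order of summation, and collect the coefficient of each $a_{2^k-1}$, with the leading monomials $x_{2^n-2-(2^k-1)}$ and $y_{2^n-2}$ coming from the $l=2^k-1$ term and the explicit additive term respectively. Your extra care about the $l=0$ summand and about why the two leading monomials appear with coefficient exactly $1$ is a point the paper glosses over, but the argument is the same.
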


\begin{proof}
The left-hand side of Equation $[2^n-2]$ is
\begin{equation}\label{lhs_1}
\sum_{0\leq l\leq 2^n-2}a_{l}x_{2^n-2-l}+y_{2^n-2}=\sum_{0\leq k<n} a_{2^k-1}x_{2^n-2-(2^k-1)} + \sum_{\substack{0\leq l\leq 2^n-2\\ l\ne 2^k-1}}a_lx_{2^n-2-l}+y_{2^n-2}.
\end{equation}
By using Lemma \ref{decomposition_1}, \eqref{lhs_1} becomes
\begin{equation}\label{lhs_2}
\sum_{0\leq k<n} a_{2^k-1}x_{2^n-2-(2^k-1)} +\sum_{\substack{0\leq l\leq 2^n-2\\ l\ne 2^k-1}} x_{2^n-2-l} \left(\sum_{2^k-1\leq l} C_{l-2^k}a_{2^k-1}+C_{l-1}\right)+y_{2^n-2}.
\end{equation}
We may rearrange the middle term in \eqref{lhs_2} to obtain
\begin{equation}\label{lhs_3}
\sum_{0\leq k<n} a_{2^k-1}\left(\sum_{2^k-1< l\leq 2^n-2} C_{l-2^k} x_{2^n-2-l}\right)+\sum_{0\leq l\leq 2^n-2} x_{2^n-2-l} C_{l-1}.
\end{equation}
Clearly we have
$$\sum_{2^k-1< l\leq 2^n-2} C_{l-2^k} x_{2^n-2-l}=C_{2^n-2^k-2}$$
and
$$\sum_{0\leq l\leq 2^n-2} x_{2^n-2-l}\left(\sum_{2^k-1\leq l} C_{l-1}\right)=C_{2^n-3},$$
so, upon substituting into \eqref{lhs_3}, we obtain the desired result.
\end{proof}

Now we use the structure of individual terms in Equation $[2^n-2]$, as determined in Lemma \ref{decomposition_2}, to compute the distribution of values for the $s_{2^k-2}(x,y)$ as $x$ and $y$ range over all of $\F_q[[t]]$.

\begin{lem}\label{probabilities}
If for a given $x$ and $y$ we have $s_{2^n-2}(x,y)=q^l$, and \linebreak ${(x_{2^n-1},y_{2^n-1},...,x_{2^{n+1}-2},y_{2^{n+1}-2})}$ are randomly and uniformly chosen from $\F_q$, then 
\begin{center}
$s_{2^{n+1}-2}(x,y)=\begin{cases}
0 & \mbox{with probability }\frac{q-1}{q^{l+2}},\\
q^l & \mbox{with probability }1-\frac{1}{q^{l+1}},\\
q^{l+1} & \mbox{with probability }\frac{1}{q^{l+2}}.\\
\end{cases}$
\end{center}
\end{lem}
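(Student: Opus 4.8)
The plan is to condition on the coordinates of index at most $2^n-2$ (on which, as already observed, $s_{2^n-2}$ depends), and then to track how the solution set of the stage-system grows when one passes from the $(2^n-2)$th-stage system to the $(2^{n+1}-2)$th-stage system. The key structural point is that this passage introduces exactly one new free unknown and exactly one new genuine constraint. Indeed, of the newly added equations $[2^n-1],[2^n],\dots,[2^{n+1}-2]$, all but $[2^{n+1}-2]$ are of the form $[i]$ with $i\neq 2^k-2$ and so merely express $a_{i+1}$ in terms of lower-indexed unknowns; correspondingly, of the new unknowns $a_{2^n-1},\dots,a_{2^{n+1}-2}$ only $a_{2^n-1}$ survives the elimination. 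Hence, after eliminating every determined $a_i$ by Lemma \ref{decomposition_1}, the $(2^n-2)$th-stage system becomes a system of $n$ constraints on the $n$ free unknowns $a_0,a_1,a_3,\dots,a_{2^{n-1}-1}$, whose solution set is an affine subspace $V\subseteq\F_q^n$ of dimension $l$ (since $s_{2^n-2}=q^l$), while the $(2^{n+1}-2)$th-stage system adds the single free unknown $a_{2^n-1}$ together with the single constraint coming from Equation $[2^{n+1}-2]$. Therefore $s_{2^{n+1}-2}$ equals the number of points of $(V\times\F_q)\cap\{L=0\}$, where $L$ is the affine-linear functional of $a_0,a_1,a_3,\dots,a_{2^n-1}$ obtained by applying Lemma \ref{decomposition_1} to the left-hand side of Equation $[2^{n+1}-2]$.

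Next I would read off the shape of $L$ from Lemma \ref{decomposition_2}, applied with $n$ replaced by $n+1$: since $a^*_{2^{n+1}-2}=0$, we get $L=\alpha\,a_{2^n-1}+M$, where $\alpha=x_{2^n-1}+C_{2^n-2}$ is the coefficient of the new unknown $a_{2^n-1}$, and $M=\sum_{0\le k<n}\beta_k\,a_{2^k-1}+\gamma$ is affine-linear in the old free unknowns, with $\beta_k=x_{2^{n+1}-1-2^k}+C_{2^{n+1}-2-2^k}$ and $\gamma=y_{2^{n+1}-2}+C_{2^{n+1}-3}$. The point count then splits into cases. If $\alpha\neq 0$, then for each $v\in V$ there is a unique value of $a_{2^n-1}$ with $L=0$, so $s_{2^{n+1}-2}=|V|=q^l$. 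If $\alpha=0$, then $L=M$ is independent of $a_{2^n-1}$, so $(V\times\F_q)\cap\{L=0\}=(V\cap\{M=0\})\times\F_q$, and $s_{2^{n+1}-2}$ equals $q^{l+1}$, $q^l$, or $0$ according as the restriction $M|_V$ is identically zero, a non-constant affine functional on $V$, or a non-zero constant.

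It remains to compute the relevant probabilities. Because $x_{2^n-1}$ is one of the freshly chosen coordinates while $C_{2^n-2}$ involves only the conditioned coordinates, $\alpha$ is uniform on $\F_q$; thus $\Pr[\alpha=0]=1/q$, and on the complement $s_{2^{n+1}-2}=q^l$. I would then show that, conditionally on $\alpha=0$ (equivalently, on a fixed value of $x_{2^n-1}$), the tuple $(\beta_0,\dots,\beta_{n-1},\gamma)$ is uniformly distributed on $\F_q^{n+1}$. The point is that the ``leading'' coordinates $x_{2^{n+1}-1-2^k}$ (for $0\le k<n$) and $y_{2^{n+1}-2}$ are pairwise distinct freshly chosen coordinates, each having strictly larger index than every coordinate occurring in its attached correction term ($C_{2^{n+1}-2-2^k}$, respectively $C_{2^{n+1}-3}$) and strictly larger index than $x_{2^n-1}$; so if one reveals the fresh coordinates in order of increasing index, then each of $\beta_{n-1},\beta_{n-2},\dots,\beta_0,\gamma$ is of the form (a freshly revealed uniform coordinate) plus (a quantity already determined), hence is conditionally uniform given everything revealed before it. Consequently $M$ is a uniformly random affine-linear functional on $\F_q^n$, so its restriction $M|_V$ is uniform among the $q^{l+1}$ affine functionals on the $l$-dimensional affine space $V$. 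Hence, conditionally on $\alpha=0$: $M|_V\equiv 0$ with probability $q^{-(l+1)}$; $M|_V$ is a non-zero constant with probability $(q-1)q^{-(l+1)}$; and $M|_V$ is non-constant with probability $1-q^{-l}$. Multiplying by $\Pr[\alpha=0]=1/q$ and combining with the case $\alpha\ne 0$ gives $\Pr[s_{2^{n+1}-2}=0]=(q-1)/q^{l+2}$, $\Pr[s_{2^{n+1}-2}=q^{l+1}]=1/q^{l+2}$, and $\Pr[s_{2^{n+1}-2}=q^l]=\frac{q-1}{q}+\frac1q(1-q^{-l})=1-1/q^{l+1}$, as asserted.

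The step I expect to cost the most effort is the index bookkeeping underlying the uniformity claim: one must verify, from the definitions of the $C_n$ and from the fact that $a^*$ vanishes precisely at the indices $2^k-2$, that the leading coordinates $x_{2^{n+1}-1-2^k}$ and $y_{2^{n+1}-2}$ really are fresh, pairwise distinct, and absent from all the correction terms, which is exactly what licenses the sequential-conditioning argument. The degenerate cases $n=1$ (where there is a single $\beta_k$) and $l=0$ (where $V$ is a single point, so that the three events for $M|_V$ have probabilities $q^{-1}$, $(q-1)q^{-1}$, $0$) should be checked directly, but the displayed formulas specialize to them correctly.
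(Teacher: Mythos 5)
Your proof is correct and follows essentially the same route as the paper: both rest on Lemma \ref{decomposition_2} to put Equation $[2^{n+1}-2]$ in the form (fresh leading coordinate $+$ lower-order terms) times each free unknown plus a constant, both use the distinctness of those fresh coordinates to get independent uniform coefficients, and both split into the same three cases (equation degenerate and satisfied, degenerate and unsatisfiable, or nondegenerate on the solution space). The only difference is organizational: you keep $M$ as a uniform affine functional on the ambient $\F_q^n$ and restrict it to the $l$-dimensional solution space $V$, whereas the paper first substitutes the basis expressions \eqref{basis_elements} and checks directly that the resulting coefficients of the basis elements $a_{2^{n_i}-1}$ retain the ``fresh coordinate plus earlier data'' form --- your version slightly streamlines that bookkeeping but the underlying computation is identical.
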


\begin{proof}
Since $s_{2^n-2}(x,y)=q^l$, there are $l$ indices $0<n_1<n_2<...<n_l<n$ such that every $a_j$ ($0\leq j\leq 2^n-2)$ can be written as
\begin{equation}\label{basis_elements}
a_j=\sum_{n_i:2^{n_i}-1\leq j} R_{2^n-2} a_{2^{n_i}-1}+R_{2^n-2},
\end{equation}
where we have noted that $a_{2^j-1}$ can be expressed in terms of the $a_{2^i-1}$ with $i\leq j$.  Note that upon eliminating variables, we may introduce rational functions (which may be different for different choices of the parameters $x_i$ and $y_i$), but by the assumption $s_{2^n-2}(x,y)=q^l$, their denominators are nonzero.
\\
Observe that the coefficients $a_{n_1}, \dots , a_{n_l}$ give a basis for the solution space of the $(2^{n+1}-2)$nd stage system.  We will determine the probability that there is a solution to the $a_{2^{n+1}-1}$-st stage system, and the probability that $a_{2^{n+1}-1}$ is linearly independent from these $a_{n_i}$.

Lemma \ref{decomposition_2} states that Equation $2^{n+1}-2$ reads
\begin{align}
0 = \,\, & a_{2^n-1}(x_{2^{n+1}-2^n-1}+C_{2^{n+1}-2-(2^n-1)-1}) \label{LHS} \\
& \qquad + \sum_{0\leq k< n} a_{2^k-1}(x_{2^{n+1}-2^k-1}+C_{2^{n+1}-2-(2^k-1)-1})+(y_{2^{n+1}-2}+C_{2^{n+1}-3}) \notag
\end{align}
\\
Using \eqref{basis_elements}, for the terms $a_{2^k-1}$ where $k\ne n_i$, in the sum in \eqref{LHS} yields
\begin{align*}
& \sum_{\substack{0\leq k< n\\k\ne n_i}} a_{2^k-1}(x_{2^{n+1}-2^k-1}+R_{2^{n+1}-2-(2^k-1)-1}) \\
& \qquad =\sum_{\substack{0\leq k< n\\k\ne n_i}} \left((x_{2^{n+1}-2^k-1}+R_{2^{n+1}-2-(2^k-1)-1})\left(\sum_{n_i< k}R_{2^n-2} a_{2^{n_i}-1}+R_{2^n-2}\right)\right) \\
& \qquad = \sum_{1\leq i\leq l} a_{2^{n_i}-1}  \left(\sum_{n_i< k<n} R_{2^n-2}\left(x_{2^{n+1}-2^k-1}+R_{2^{n+1}-2-(2^k-1)-1}\right)\right) \\
&\qquad \qquad + \sum_{\substack{0\leq k<n\\k\ne n_i}} R_{2^n-2} \left(x_{2^{n+1}-2^k-1}+R_{2^{n+1}-2-(2^k-1)-1}\right).
\end{align*}
Since $k<n$ we have $2^{n+1}-2^k-1<2^{n+1}-2^n-1=2^n-1$, hence the above sum may be written more compactly as
$$\sum_{1\leq i\leq l} a_{2^{n_i}-1}R_{2^n-2}+R_{2^n-2}.$$

Therefore, again because $k<n$, \eqref{LHS} can be written as:
\begin{align}
0 =\,\,& a_{2^n-1}(x_{2^{n+1}-2^n-1}+R_{2^{n+1}-2-(2^n-1)-1}) \label{new_equation} \\
& \qquad + \sum_{n_i} a_{2^{n_i}-1} (x_{2^{n+1}-2^{n_i}-1}+R_{2^{n+1}-2-(2^{n_i}-1)-1})+(y_{2^{n+1}-2}+R_{2^{n+1}-3}). \notag
\end{align}
Now we analyze the right-hand side 
\begin{align}
\,\,& a_{2^n-1}(x_{2^{n+1}-2^n-1}+R_{2^{n+1}-2-(2^n-1)-1}) \label{newLHS} \\
& \qquad + \sum_{n_i} a_{2^{n_i}-1} (x_{2^{n+1}-2^{n_i}-1}+R_{2^{n+1}-2-(2^{n_i}-1)-1})+(y_{2^{n+1}-2}+R_{2^{n+1}-3}). \notag
\end{align}
of \eqref{new_equation}, to determine for which values of the $a_{n_i}$ it takes the value zero.
\\
There are three cases: \eqref{newLHS} can be `identically zero' (i.e., zero for all values of the $a_{n_i}$), `never zero' (i.e., zero for no possible values of the $a_{n_i}$), or `sometimes zero' (i.e., if the coefficient of $a_{2^n-1}$ or one of the $a_{2^{n_i}-1}$ is not zero).
\begin{enumerate}
\item \textbf{\eqref{newLHS} is identically zero.} Since $a_{2^n-1}$ and the $a_{2^{n_i}-1}$ are independent of one another, this case occurs precisely when the coefficients of $a_{2^n-1}$ and the $a_{2^{n_i}-1}$ are zero, and when $y_{2^{n+1}-2}+R_{2^{n+1}-3}$ is zero.  The value of $R_{2^{n+1}-2-(2^n-1)-1}$ depends only on $(x_0,y_0,...,x_{2^n-2},y_{2^n-2})$; in particular, it is independent of $x_{2^{n+1}-2^{n}-1}$.  Hence the coefficient of $a_{2^n-1}$ is zero with probability $\frac{1}{q}$, as this occurs precisely when $x_{2^{n+1}-2^{n}-1}=-R_{2^{n+1}-2-(2^{n}-1)-1}$.
\\
Similarly, for each $i\leq l$, the value of $R_{2^{n+1}-2-(2^{n_i}-1)-1}$ depends only on $x_0,y_0,...,x_{2^{n+1}-2^{n_i}-2},y_{2^{n+1}-2^{n_i}-2}$, and so the coefficient of $a_{2^{n_l}-1}$ is zero precisely when $x_{2^{n+1}-2^{n_l}-1}=-R_{2^{n+1}-2-(2^{n_l}-1)-1}$.  This also occurs with probability $\frac{1}{q}$ for the same reason as above.
\\
Finally, $R_{2^{n+1}-3}$ depends only on $x_0,y_0,...,x_{2^{n+1}-3},y_{2^{n+1}-3}$, and so $y_{2^{n+1}-2}+R_{2^{n+1}-3}$ equals zero with probability $\frac{1}{q}$.
\\
Since each event depends on different $x_i$ or $y_i$, they are collectively independent.  Therefore, Expression \ref{newLHS} is identically zero with probability $\frac{1}{q^{l+2}}$.

\item \textbf{\eqref{newLHS} is never zero.} This case occurs precisely when the coefficients of $a_{2^n-1}$ and the $a_{2^{n_i}-1}$ are zero, and when $y_{2^{n+1}-2}+R_{2^{n+1}-3}$ is nonzero.  By the same arguments as in case (1), the coefficient $a_{2^n-1}$ and each $a_{2^{n_i}-1}$ is zero with probability $\frac{1}{q}$, and $y_{2^{n+1}-2}+R_{2^{n+1}-3}$ is nonzero with probability $\frac{q-1}{q}$.  Again, all events are independent, hence Expression \ref{newLHS} is never zero with probability $\frac{q-1}{q^{l+2}}$.

\item\textbf{In \eqref{newLHS}, the coefficient of $a_{2^n-1}$ or one of the $a_{2^{n_i}-1}$ is not zero.} Since this is the only remaining case, its probability is $1-\frac{q-1}{q^{l+2}}-\frac{1}{q^{l+2}}=1-\frac{1}{q^{l+1}}$.

\end{enumerate}
Now, if (1) holds, then $s_{2^{n+1}-2}(x,y)=q^{l+1}$, since $a_{2^{n}-1}$ will be linearly independent of the other $a_{2^i - 1}$.  If (2) holds, then $s_{2^{n+1}-2}(x,y)=0$.  Finally, if (3) holds, then $a_{2^{n+1}-2}$ is determined as a linear function of the other $a_{2^i - 1}$, and so $s_{2^{n+1}-2}(x,y)=q^{l}$.
\end{proof}

Now we can prove Theorem \ref{thm_construction_of_specific_Kakeya}:

\begin{proof}[Proof of Theorem \ref{thm_construction_of_specific_Kakeya}] Observe that the probabilities computed in Lemma \ref{probabilities} are compatible with the Haar measure on $\F_q[[t]]^2$, and also that those probabilities depend only on $l$ and not on $n$. Therefore we can create a Markov chain on the points $0,1,q,q^2,...$, and the related random variables described by $$P(X_n=l)=\mu(\{(x,y)\in \F_q[[t]]^2 \, : \, s_{2^n-2}(x,y)=l\}),$$
with transition functions as follows: at time $n$, from the point $q^l$ we go to one of $0$, $q^l$ and $q^{l+1}$, with probabilities $\frac{q-1}{q^{l+2}}$, $1-\frac{1}{q^{l+1}}$, $\frac{1}{q^{l+2}}$ respectively.

Observe that the expected value does not change:
$$E(X_n|X_{n-1}=l)=0\cdot\frac{q-1}{q^{l+2}}+q^l\cdot\left(1-\frac{1}{q^{l+1}}\right)+q^{l+1}\cdot\frac{1}{q^{l+2}}=q^l-\frac{1}{q}+\frac{1}{q}=q^l.$$

Since from every state a positive proportion (independent of $n$) is sent to $0$, we see by a trivial induction on $l$ that as $n \to \infty$, the proportion of states at $q^l$ tends to 0, for each $l\geq 0$.  Combined with the observation that the expected value does not change, we see that the limiting distribution is concentrated at $0$.  Hence $\mu(\{(x,y)|s_{2^n-2}(x,y)=0\})\rightarrow 1$, and the proof is complete.
\end{proof}
Combining the results of Lemma \ref{general_Kakeya_from_specific} and Theorem \ref{thm_construction_of_specific_Kakeya}, we obtain Theorem \ref{main}:
\begin{thmcustom}{\ref{main}}
For all $n>1$, there exists a subset $E\subset \F_q[[t]]^n$ of measure 0 which contains a line in every direction.
\end{thmcustom}

\begin{rem}
By using an inductive argument, one may prove that, for $q>2$, at time $t$, the total measure $\mu_q(t)$ of the Kakeya set in $\mathbb{F}_q[[t]]^2$ satisfies 
$$\mu_q(t) \ll \dfrac{\ln(t)}{t},$$
where the (explicitly computable) constant depends only on $q$.  In particular, we have
$$|H_k||\F_q|^{-2k} \ll \dfrac{\ln(\ln(k))}{\ln(k)},$$
where $H_k$ denotes the size of the projection of the Kakeya set to $\F_q[[t]]/t^k$.  Taking the limit as $k\to\infty$ yields 
$$\lim_{k\to\infty} |H_k||\F_q|^{-2k} = 0$$
which answers Question 1 as posed in \cite{Ell09}.
\end{rem}

\section{Minkowski dimension for $R=\F_q[[t]]$ and $R=\Z_p$}

The analogue of the Minkowski dimension of a subset $E$ of $\F_q[[t]]^n$ is 
$$\lim_{k\rightarrow \infty} \frac{\log |E_k|}{\log q^k}$$
where $E_k$ is the image of $E$ under the projection $\F_q[[t]]\rightarrow \F_q[[t]]/t^k$. In this section we prove that if $E$ is a Kakeya set in $\F_q[[t]]^2$ then the limit exists and equals 2. In order to obtain this result we give a sharper lower bound than Proposition 4.21 in \cite{Ell09} for the measure of any Kakeya set in $R^2$ where $R=\F_q[t]/t^k$ or $\Z/p^k\Z$:

\begin{prop}\label{finite}
Let $E$ be a Kakeya set in $R^2$ where $R=\F_q[t]/t^k$ or $\Z/p^k\Z$. Then
$$|E|\geq \frac{|R|^2}{2k}.$$
\end{prop}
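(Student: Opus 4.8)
The plan is to run a Cauchy--Schwarz incidence argument between $E$ and the lines it is forced to contain. Both rings are finite local principal ideal rings; write $\pi$ for a uniformizer ($\pi=t$ or $\pi=p$) and let $q=|R/\pi R|$ be the size of the residue field (so $q=p$ when $R=\Z/p^k\Z$), so that $|R|=q^k$ and, for $0\le j\le k$, the ideal $\pi^jR$ has $q^{k-j}$ elements while the multiplication map $s\mapsto \pi^j s$ has all of its nonempty fibers of size $q^j$. First I would fix, for each $b\in R$, a line $\ell_b\subseteq E$ with direction vector $(1,b)$ (these exist because $E$ is Kakeya); parametrizing by the first coordinate shows $|\ell_b|=|R|=q^k$. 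Setting $r(x)=\#\{b\in R:x\in\ell_b\}$ for $x\in E$ and double counting incidences yields $\sum_{x\in E}r(x)=\sum_{b\in R}|\ell_b|=q^{2k}$.

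The heart of the argument---and the step I expect to be the main obstacle---is bounding $|\ell_b\cap\ell_{b'}|$ for $b\ne b'$. Writing $\ell_b=\{(u_b+s,\,v_b+bs):s\in R\}$ and eliminating the parameter of $\ell_{b'}$, a common point corresponds to a solution $s\in R$ of a single linear equation $(b-b')s=c$ with $c$ fixed, and distinct $s$ give distinct points, so $|\ell_b\cap\ell_{b'}|$ equals the number of such $s$. By the structural remark above, with $j=v_\pi(b-b')\le k-1$ this count is $0$ or $q^j$, hence $|\ell_b\cap\ell_{b'}|\le q^{v_\pi(b-b')}$; this is exactly where the non-field nature of $R$ enters, since over a field all these intersections would be single points. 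Since for fixed $b$ there are $(q-1)q^{k-1-j}$ choices of $b'$ with $v_\pi(b-b')=j$, I obtain
\begin{align*}
\sum_{x\in E}r(x)\bigl(r(x)-1\bigr)=\sum_{\substack{b,b'\in R\\ b\ne b'}}|\ell_b\cap\ell_{b'}| &\le q^k\sum_{j=0}^{k-1}(q-1)q^{k-1-j}q^{j}\\
&=k(q-1)q^{2k-1}<k\,q^{2k}.
\end{align*}

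Finally I would combine these bounds by Cauchy--Schwarz:
\begin{align*}
q^{4k}=\Bigl(\sum_{x\in E}r(x)\Bigr)^2 &\le |E|\sum_{x\in E}r(x)^2=|E|\Bigl(\sum_{x\in E}r(x)(r(x)-1)+\sum_{x\in E}r(x)\Bigr)\\
&<|E|\,(k+1)\,q^{2k}\le 2k\,q^{2k}\,|E|,
\end{align*}
where the last step uses $k\ge1$; dividing gives $|E|\ge q^{2k}/(2k)=|R|^2/(2k)$. Apart from the intersection bound everything is formal, so the only things to be careful about are the exact count $\#\{b':v_\pi(b-b')=j\}$ and checking that the constants really land on $2k$ rather than a larger multiple of $k$.
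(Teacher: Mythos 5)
Your proof is correct, but it takes a genuinely different route from the paper's. Both arguments hinge on the same key fact --- two lines in $E$ with directions $(1,b)$ and $(1,b')$ meet in at most $q^{v(b-b')}$ points, where $v$ is the valuation with respect to the maximal ideal --- but they exploit it differently. The paper runs a truncated inclusion--exclusion (Bonferroni) bound over only $\lfloor |R|/k\rfloor+1$ lines, enumerated in a specific base-$q$ (resp.\ base-$p$) order so that the partial sums $\sum_{i<j}|L_i\cap L_j|$ can be controlled by the auxiliary function $f(u)=\sum_{i\le u}|\mathfrak{m}|^{v(\alpha_i)}$ of Lemma \ref{function_bounds}; the whole point of that lemma is to keep each correction term below $|R|$, and the choice of how many lines to use is tuned to make this work. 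You instead use all $|R|$ lines and a second-moment (Cauchy--Schwarz) incidence count: the only input beyond the intersection bound is the exact count $(q-1)q^{k-1-j}$ of directions at each valuation distance $j$, giving $\sum_{b\ne b'}|\ell_b\cap\ell_{b'}|\le k(q-1)q^{2k-1}$, after which everything is formal. Your version avoids the careful ordering and the floor/ceiling bookkeeping entirely, and in fact yields the marginally sharper bound $|E|\ge |R|^2\cdot q/(q+k(q-1))\ge |R|^2/(k+1)$ before you round down to $|R|^2/(2k)$; the paper's version has the modest advantage of only needing a line in a sparse subset of the directions. All the individual steps in your write-up check out: the fiber size $q^j$ of multiplication by an element of valuation $j$, the identity $\sum_x r(x)(r(x)-1)=\sum_{b\ne b'}|\ell_b\cap\ell_{b'}|$, and the final constant.
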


Let $E$ be Kakeya; then it contains a line of the form $ax+y=b$ for all $a\in R$. We enumerate the lines by their direction vectors as follows:
\begin{itemize}
 \item If $R=\Z/p^k\Z$ then we take $L_i$ to be (any) of the lines in $E$ of the form $\alpha_i x+y=b_i$, where $\alpha_i = i \pmod{p^k}$.
 \item If $R=\F_q[t]/t^k$ then we take $L_i$ to be (any) of the lines in $E$ of the form $\alpha_i x+y=b_i \hspace{0.2 cm}(\mbox{mod }t^k)$, where, for $i=\sum_{j=0}^{k-1}a_jq^j$ in base $q$, we have $\alpha_i = \left(\sum_{j=0}^{k-1}a_jt^j\right)$.
\end{itemize}

\begin{nota}
For nonzero $r \in R$, we denote by $v(r)$ the largest integer satisfying $r\in \mathfrak{m}^{v(r)}$, where $\mathfrak{m}$ is the nilradical of $R$.
\end{nota}

For any two lines $L_i:\alpha_i x+y=b_i$ and $L_j:\alpha_j x+y=b_j$ with $i\ne j$, we see that if $(x_0,y_0) \in L_i \cap L_j$ then $(\alpha_i-\alpha_j)x_0=b_i-b_j$. Hence if $v(\alpha_i-\alpha_j)=l$, then the number of possible values for $x_0$ cannot exceed $|\mathfrak{m}|^l$. In particular, since the value of $x_0$ determines the value of $y_0$, the size of the intersection $|L_i \cap L_j|$ is at most $|\mathfrak{m}|^l = |\mathfrak{m}|^{v(\alpha_i-\alpha_j)}$.

\begin{lem}\label{function_bounds}
For the function $f(u):=\sum_{i=1}^{u} \mathfrak{m}^{v(\alpha_i)}$, we have $f(u) \leq u\cdot \left\lceil\log_{|\mathfrak{m}|} (u)\right\rceil$.  In particular, for $u\leq|\mathfrak{m}|^k / k = |R| / k$, we have $f(u) \leq |R|$.
\end{lem}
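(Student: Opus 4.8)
The plan is to reduce the bound to an elementary counting problem: the distribution of the valuations $v(\alpha_i)$ as $i$ ranges over $1,\dots,u$. The key observation is that this distribution is the \emph{same} in the two cases $R=\Z/p^k\Z$ and $R=\F_q[t]/t^k$, namely ``$v(\alpha_i)=$ the number of trailing zeros of $i$ written in base $|\mathfrak m|$''.

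First I would unwind the definition of the enumeration $\alpha_i$ and record the following fact, writing $q:=|\mathfrak m|$ (so $|R|=q^k$, and in the case $R=\F_q[t]/t^k$ this $q$ is the cardinality of the coefficient field): for every integer $l\ge 0$ and every $i\ge 1$ one has $v(\alpha_i)\ge l$ if and only if $q^l$ divides $i$ as an ordinary integer. When $R=\Z/p^k\Z$ this is immediate since $\alpha_i\equiv i$ and $q=p$. When $R=\F_q[t]/t^k$, the base-$q$ digits of $i$ are exactly the coefficients of $\alpha_i$ as a polynomial in $t$, so $t^l\mid\alpha_i$ iff the last $l$ base-$q$ digits of $i$ vanish iff $q^l\mid i$. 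Consequently, in both cases,
$$\#\{\,1\le i\le u:\ v(\alpha_i)\ge l\,\}\ =\ \#\{\,1\le i\le u:\ q^l\mid i\,\}\ =\ \lfloor u/q^l\rfloor.$$

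Next I would convert $f(u)=\sum_{i=1}^{u}q^{v(\alpha_i)}$ into a sum over the level sets of $v(\alpha_\cdot)$. Using the telescoping identity $q^{v}=1+\sum_{l=1}^{v}(q^l-q^{l-1})$ and interchanging the order of summation (summing first over $l$, using the count above) gives
$$f(u)\ =\ u\ +\ \sum_{l\ge 1}(q^l-q^{l-1})\,\lfloor u/q^l\rfloor.$$
Each term with $l\ge 1$ is at most $(q^l-q^{l-1})\cdot u/q^l\le u$, and it vanishes unless $q^l\le u$, i.e.\ unless $l\le\log_q u$; hence at most $\lceil\log_q u\rceil$ terms survive in total (counting the leading term $u$), which yields $f(u)\le u\,\lceil\log_{|\mathfrak m|}(u)\rceil$. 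For the ``in particular'' clause, if $u\le|R|/k=q^k/k$ then $\log_q u\le k-\log_q k\le k$, so $\lceil\log_q u\rceil\le k$ and therefore $f(u)\le ku\le k\cdot(q^k/k)=q^k=|R|$.

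I do not expect a genuine obstacle here; the argument is short. The one place that wants care is the opening identification of $v(\alpha_i)$ with the $q$-adic order of the index $i$ — in particular checking that the base-$q$-digit bookkeeping for $R=\F_q[t]/t^k$ really does line up with ordinary divisibility by powers of $q$ — together with keeping the floor/ceiling estimates honest near the boundary $u\approx q^k$; the latter is harmless for the application, where only $u<q^k$ is used.
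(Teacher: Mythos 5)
Your argument is correct and is essentially the paper's own proof: both count $\#\{\,i\le u:\ v(\alpha_i)\ge l\,\}=\lfloor u/|\mathfrak{m}|^{l}\rfloor$, rewrite $f(u)$ by Abel summation as $\sum_{l}(|\mathfrak{m}|^{l}-|\mathfrak{m}|^{l-1})\lfloor u/|\mathfrak{m}|^{l}\rfloor$, and then bound each surviving term by $u$; your explicit verification that $v(\alpha_i)$ equals the number of trailing base-$|\mathfrak{m}|$ zeros of the integer $i$ is a detail the paper leaves implicit. The only caveat is that the count of surviving terms is $1+\lfloor\log_{|\mathfrak{m}|}u\rfloor$, which exceeds $\lceil\log_{|\mathfrak{m}|}u\rceil$ exactly when $u$ is a power of $|\mathfrak{m}|$ (or $u=1$) — an off-by-one the published proof shares and which is harmless for the application.
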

\begin{proof}
There are $\left\lfloor\frac{u}{|\mathfrak{m}|^w}\right\rfloor$ terms for which $v(\alpha_i)$ is equal to  $|\mathfrak{m}|^w$.  Thus we may write 
\begin{align*}
f(u) &= \sum_{w=0}^{\infty} (|\mathfrak{m}|^w - |\mathfrak{m}|^{w-1}) \cdot \left\lfloor\frac{u}{|\mathfrak{m}|^w}\right\rfloor \\
  &\leq \sum_{w=0}^{\left\lceil\log_{|\mathfrak{m}|} (u)\right\rceil} (|\mathfrak{m}|^w) \cdot \frac{u}{|\mathfrak{m}|^w} \\
  &= u\cdot \left\lceil\log_{|\mathfrak{m}|} (u)\right\rceil,
\end{align*}
which gives the first statement.  The second statement follows immediately.
\end{proof}

With this bound, we may now prove Proposition \ref{finite}.
\begin{proof}[Proof of Proposition \ref{finite}] \textrm{\,} \\
Inclusion-exclusion gives a lower bound on the size of $E$:
\begin{equation}\label{basic} 
|E|\geq |\cup_{j=1}^{l+1} L_j| \geq \sum_{j=1}^{l+1} \left( |L_j|-\sum_{i=1}^{j-1} |L_i\cap L_j| \right),
\end{equation}
where $l=\left\lfloor\frac{|R|}{k}\right\rfloor$ is as given in Lemma \ref{function_bounds}.
We can explicitly write the terms in \eqref{basic} as
\begin{equation*}\label{est}
|L_j|-\sum_{i=1}^{j-1} |L_i\cap L_j|=|R|-\sum_{i=1}^{j-1} \mathfrak{m}^{v(\alpha_j - \alpha_i)} = |R|-f(j-1).
\end{equation*}
Applying the upper bound on $f(u)$ from Lemma \ref{function_bounds} yields
\begin{align*}
|E| &\geq \sum_{j=1}^{l+1} \left(|R|-f(j-1)\right) \\
    &= (l+1)|R| - \sum_{j=2}^{l+1} (j-1) \left\lceil\log_{|\mathfrak{m}|} (j-1)\right\rceil \\
    &\geq (l+1)|R| -  (\left\lceil\log_{|\mathfrak{m}|} (l)\right\rceil) \frac{l(l+1)}{2} \\
    &\geq |R| \frac{l+1}{2} \\
    &> \frac{|R|^2}{2k},
\end{align*}
and this is precisely the desired bound.
\end{proof}

Now we can complete the proof of Theorem \ref{Mink}:
\begin{proof} [Proof of Theorem \ref{Mink}]
From Proposition \ref{finite}, if $E$ is a Kakeya set in $R^2$ with either $R=\Z/p^k\Z$ or $R=\F_q[t]/t^k$, then the Minkowski dimension of $E$ is at least $2-\frac{\log 2k}{k \log|\mathfrak{m}|}$. In particular, if we fix $p$ or $q$ and take $k \to \infty$ then the lower bound goes to 2.  The desired result follows, since the Minkowski dimension of any set in $R^2$ is at most 2.
\end{proof}

\section{Acknowledgements}
The authors would like to thank Jordan Ellenberg, Richard Oberlin and Terence Tao for their original paper which inspired our work, and Jordan Ellenberg in particular for his guidance and persistent encouragement in preparing this manuscript.


\begin{thebibliography}{99}

\bibitem[Bes19]{Bes19} Abram Besicovitch \textit{Sur deux questions d'integrabilite des fonctions}, J. Soc. Phys. Math. Volume 2, 1919, pp 105-123.

\bibitem[Dav71]{Dav71} Roy O. Davies, \textit{Some remarks on the Kakeya problem}, Proc. Cambridge Phil. Soc. Volume 69, 1971, pp 417-421.

\bibitem[Ell09]{Ell09} J. S. Ellenberg, R. Oberlin and T. Tao, \textit{The Kakeya set and maximal conjectures for algebraic varieties over finite fields}, Mathematika, Volume 56, Issue 01, January 2010, pp 1-25.

\bibitem[Wol99]{Wol99} Thomas Wolff. \textit{An improved bound for Kakeya type maximal functions}, Rev. Mat. Iberoamericana Volume, 11, 1999, pp 651-674.

\bibitem[Dvir08]{Dvir08} Zeev Dvir.  \textit{On the size of Kakeya sets in finite fields}, J. Amer. Math. Soc, 2009.

\end{thebibliography}
\end{document}